\newcolumntype{b}{X}
\newcolumntype{s}{>{\hsize=.5\hsize}X}
\newtheorem{theorem}{Theorem} 
\theoremstyle{definition}
\newtheorem{example}{Example}[section]
\theoremstyle{remark}
\newcommand{\ba}{\begin{align}}
\newcommand{\ea}{\end{align}}
\newcommand{\be}{\begin{equation}}
\newcommand{\ee}{\end{equation}}
\newcommand{\bea}{\begin{eqnarray}}
\newcommand{\eea}{\end{eqnarray}}
\newcommand{\barr}{\begin{array}}
\newcommand{\earr}{\end{array}}
\newcommand{\bn}{\begin{enumerate}}
\newcommand{\en}{\end{enumerate}}
\newcommand{\bi}{\begin{itemize}}
\newcommand{\ei}{\end{itemize}}
\newcommand{\bbbm}{\begin{pmatrix}}
\newcommand{\eeem}{\end{pmatrix}}
\newcommand{\cE}{{\cal E}}
\newcommand{\cF}{{\cal F}}
\newcommand{\cG}{{\cal G}}
\newcommand{\cI}{{\cal I}}
\newcommand{\cN}{{\cal N}}
\newcommand{\cP}{{\cal P}}
\newcommand{\cV}{{\cal V}}
\newcommand{\E}{{\mathbb E}}
\newcommand{\N}{{\mathbb N}}
\newcommand{\R}{{\mathbb R}}
\newcommand{\al}{\alpha}
\newcommand{\tta}{\theta}
\newcommand{\ignore}[1]{}{}
\newcommand{\noin}{\noindent}
\newcommand{\nn}{\nonumber}
\newcommand{\p}{{\partial}}
\newcommand{\q}{\quad}
\newcommand{{\QED}}{{\hfill QED} \bigskip}
\renewcommand{\subset}{\subseteq}
\newcommand{\cal}{\mathcal}
\newcommand{\es}{\varnothing}
\definecolor{darkspringgreen}{rgb}{0.09, 0.45, 0.27} 
\definecolor{darkgray}{rgb}{0.66, 0.66, 0.66}
\numberwithin{equation}{section}
\numberwithin{theorem}{section}
\tikzset{
    partial ellipse/.style args={#1:#2:#3}{
        insert path={+ (#1:#3) arc (#1:#2:#3)}
    }
}
\begin{document}

\title[Foundations for the Hodge-Theoretic Shapley Value]
{Axiomatic and Probabilistic Foundations for the Hodge-Theoretic Shapley Value
}
\thanks{
}

\date{\today}

\author{Tongseok Lim}
\address{Tongseok Lim: Mitchell E. Daniels, Jr. School of Business
\newline  Purdue University, West Lafayette, Indiana 47907, USA}
\email{lim336@purdue.edu}

\begin{abstract} 

This paper establishes a complete theoretical foundation for the Hodge-theoretic extension of the Shapley value introduced by Stern and Tettenhorst (2019). We show that a set of five axioms---efficiency, linearity, symmetry, a modified null-player condition, and an independency principle---uniquely characterize this value across all coalitions, not just the grand coalition. In parallel, we derive a probabilistic representation interpreting each player's value as the expected cumulative marginal contribution along a random walk on the coalition graph. These dual axiomatic and probabilistic results unify fairness and stochastic interpretation, positioning the Hodge-theoretic value as a canonical generalization of Shapley's framework.

\end{abstract}

\maketitle

\noindent\emph{Keywords: Cooperative game, Shapley axiom, Shapley value, Shapley formula, Hodge theory, Poisson's equation, path integral, random coalition formation process
}

\noindent\emph{JEL classification:
C71. MSC2020 Classification: 91A12, 05C57, 60J20, 68R01}


\section{Introduction}

Lloyd Shapley's value allocation theory for cooperative games has been one of the most central concepts in game theory. The Shapley value is widely used in many fields, including economics, finance, and machine learning, to allocate resources, assess individual agent contributions, and determine the fairness of payouts. Its applications are vast and continue to expand; modern treatises discuss its use in genetics, social networks, finance, politics, telecommunication, and operations-research problems like queueing and aircraft landing fees \citep{algaba2019handbook}. Recently, researchers have started to utilize the Shapley value in diverse fields such as machine learning for data valuation and feature attribution \citep{ghorbani2019data, mitchell2022sampling, schoch2022cs, rozemberczki2022shapley}, medicine for interpreting model predictions \citep{rodriguez2019interpretation, smith2021identifying}, and sustainable energy for cost allocation \citep{pang2021correlation}. This shows that Shapley's cooperative value allocation theory remains a vibrant area of research, applied across various contexts, and continues to inspire researchers.

The enduring appeal of the Shapley value stems from two notable facets: its four defining axioms and its elegant value allocation formula. The axioms—efficiency, symmetry, null-player, and linearity—establish a framework of fairness criteria for evaluating the individual contributions of players to a cooperative game. Since the Shapley value is the unique outcome satisfying these axioms, it emerges as a compelling solution for fair allocation. Furthermore, the Shapley formula provides a concrete mathematical method for computing each player's value. This formula calculates a player's expected marginal contribution by averaging over all possible orders in which players can join to form the grand coalition. Endowed with desirable properties inherited from the axioms, the Shapley formula is widely embraced as the standard approach for distributing the value of a cooperative game. 

A key assumption in the classic framework is that all players will eventually form the grand coalition, and the axioms are used to determine a fair allocation of the total value $v(N)$. Consequently, the theory does not directly address how to assess player contributions when the game concludes in a partial coalition state, i.e., a coalition $S \subsetneq N$. While one can apply the Shapley formula to each subgame $v$ restricted to a coalition $S$, this approach implicitly assumes the coalition grows only towards the target $S$, thereby failing to capture the full structure of the larger game involving all players in $N$. 

Addressing this gap, \citet{stern2019hodge} proposed a novel value concept based on the graph Poisson's equation, rooted in combinatorial Hodge theory \citep{lim2020hodge, candogan2011flows, jiang2011statistical}. Their framework defines a value for each player at every possible coalition state and shows that their value satisfies a natural extension of the Shapley axioms and, importantly, recovers the classic Shapley value for the grand coalition. 

However, this innovative work leaves two critical questions unanswered. First, while the classic Shapley axioms fully characterize the Shapley value, the properties presented by Stern and Tettenhorst do not uniquely characterize their value for all partial coalitions. This lack of a complete axiomatic foundation makes it difficult to argue for its canonicity. Second, it is unclear if their value admits a probabilistic interpretation analogous to the Shapley formula, which represents a probabilistic average of a player's marginal contribution. 

The primary objective of this study is to address these fundamental questions by establishing a complete theoretical foundation for this Hodge-theoretic value concept. We aim to (1) formulate a set of axioms that uniquely determines the value allocation for every possible coalition, not just the grand coalition, and (2) derive a probabilistic formula that, analogous to the classic Shapley formula, interprets the value in terms of a player's expected contribution during a dynamic coalition formation process. In the tradition of seeking alternative and deeper foundations for value allocation concepts, our work provides both axiomatic and probabilistic underpinnings that solidify this extension as a natural and robust generalization of Shapley's original theory.

This paper is structured as follows. Section \ref{Shapleyreview} provides a brief review of the classic Shapley value, its axioms, and its formula. Section \ref{STsection} details the Hodge-theoretic extension proposed by Stern and Tettenhorst. Section \ref{newaxiom} presents our first main result: a new set of five axioms that uniquely characterizes their value. Section \ref{newformula} introduces our second main result: a probabilistic representation of the value as a path integral over a random walk on the coalition graph. Section \ref{conclusion} concludes the paper with a summary and implication of our findings. Proofs of the results are presented in Sections \ref{proofs}.

\section{Review of Shapley axioms and the Shapley formula}\label{Shapleyreview}
We commence by revisiting the renowned Shapley value allocation theory (\citet{shapley1953value}), which remains a source of inspiration for researchers across diverse fields. To begin, let $N = \{1,2,...,n\}$ represent the set of {\em players} of the {\em coalition games}
\be
\cG_N = \{ v : 2^{N} \to \R \mid v(\varnothing)=0 \}. \nn 
\ee
A coalition game $v$ is a function on the subsets of $N$, where each  $S \subset N$ represents a coalition of players in $S$, and $v(S)$ represents the value assigned to the coalition $S$, with the null coalition $\varnothing$ receiving zero value. Given $v \in \cG_N$, Shapley considered the question of how to split the grand coalition value $v(N)$. The resulting allocation, known as the Shapley value, is determined uniquely by the following result. 
\begin{theorem}[\citet{shapley1953value}]
  \label{thm:shapley}
  There exists a unique allocation
  $ v \in \cG_{N} \mapsto \bigl( \phi _i (v) \bigr) _{ i \in N } $ satisfying the following conditions:
\vspace{1mm}
  
\noin{\rm {\boldmath$\cdot$} efficiency:} $ \sum _{ i \in N } \phi _i (v) = v (N) $.  \vspace{1mm}

\noin{\rm {\boldmath$\cdot$} symmetry:} 
    $ v \bigl( S \cup \{ i \} \bigr) = v \bigl( S \cup \{ j \} \bigr)
    $ for all $ S \subset N \setminus \{ i, j \} $ yields
 $ \phi _i (v) = \phi _j (v) $. \vspace{1mm}

\noin{\rm {\boldmath$\cdot$} null-player:} 
    $ v \bigl( S \cup \{ i \} \bigr) - v (S) = 0 $ for all
 $ S \subset N \setminus \{ i \} $ yields $ \phi _i (v) = 0 $. \vspace{1mm}

\noin{\rm {\boldmath$\cdot$} linearity:} 
    $ \phi _i ( \alpha v + \alpha ^\prime v ^\prime ) = \alpha \phi _i
    (v) + \alpha ^\prime \phi _i ( v ^\prime ) $ for all
 $ \alpha, \alpha ^\prime \in \mathbb{R} $ and $v,v' \in \cG_{N}$. \vspace{1mm}

  Moreover, this allocation is given by the following explicit formula:
  \begin{equation}
    \label{eqn:shapley}
    \phi _i (v) = \sum _{ S \subset N \setminus \{ i \} } \frac{ \lvert S \rvert ! \bigl( |N| - \lvert S \rvert -1 \bigr) ! }{ |N| ! } \Bigl( v \bigl(  S \cup \{ i \} \bigr) - v (S) \Bigr).   \end{equation}
\end{theorem}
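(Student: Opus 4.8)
The plan is to prove the two assertions separately: first that the explicit formula \eqref{eqn:shapley} defines an allocation satisfying the four axioms (existence), and then that these axioms force any allocation to coincide with \eqref{eqn:shapley} (uniqueness). For existence, linearity is immediate, since each $\phi_i(v)$ is a fixed linear combination of the values $v(S)$. The null-player axiom is equally direct: if $v(S \cup \{i\}) - v(S) = 0$ for every $S \subset [N] \setminus \{i\}$, then every summand in \eqref{eqn:shapley} vanishes. For the remaining two axioms the cleanest route is to recognize the weight $\frac{|S|!(N-1-|S|)!}{N!}$ as the probability, under a uniformly random ordering $\pi$ of $[N]$, that the set of players preceding $i$ equals exactly $S$. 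This recasts \eqref{eqn:shapley} as the average marginal contribution
\[
\phi_i(v) = \frac{1}{N!} \sum_{\pi} \Bigl( v(P_i^\pi \cup \{i\}) - v(P_i^\pi) \Bigr),
\]
where $P_i^\pi$ denotes the set of predecessors of $i$ under the ordering $\pi$.

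From this representation, efficiency is a telescoping identity: for each fixed $\pi$, the marginal contributions accumulated along the ordering sum to $v([N]) - v(\emptyset) = v([N])$, and averaging preserves this. Symmetry follows from the involution on orderings that swaps players $i$ and $j$ throughout: the hypothesis $v(S \cup \{i\}) = v(S \cup \{j\})$ for all $S \subset [N] \setminus \{i,j\}$ makes the marginal contribution of $j$ in the swapped ordering equal to that of $i$ in the original, so summing over the bijection yields $\phi_i(v) = \phi_j(v)$.

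For uniqueness, the key device is the basis of unanimity (carrier) games. For each nonempty $T \subset [N]$ define $u_T(S) = 1$ if $T \subset S$ and $u_T(S) = 0$ otherwise. By M\"obius inversion on the subset lattice, every $v \in \cG_N$ admits a unique expansion $v = \sum_{\emptyset \neq T} c_T\, u_T$ with $c_T = \sum_{S \subset T} (-1)^{|T| - |S|} v(S)$, so the games $u_T$ form a basis of $\cG_N$. By linearity it therefore suffices to determine any axiom-compliant allocation on each scalar multiple $c\, u_T$, and here the three structural axioms act decisively: every player $i \notin T$ is a null player in $u_T$, forcing $\phi_i(c\, u_T) = 0$; any two players $i, j \in T$ are symmetric, forcing equal shares; and efficiency forces those shares to sum to $c\, u_T([N]) = c$. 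Hence $\phi_i(c\, u_T) = c/|T|$ for $i \in T$ and $0$ otherwise, which pins down $\phi$ uniquely on the basis, and thus on all of $\cG_N$.

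The main obstacle is the combinatorial bookkeeping linking the two halves: confirming that the random-order average used in the existence step agrees term-by-term with the closed form \eqref{eqn:shapley}, equivalently that exactly $|S|!(N-1-|S|)!$ of the $N!$ orderings place precisely the coalition $S$ before player $i$. This is a routine count --- arrange $S$ freely, then $i$, then the complement --- but it is the hinge on which both the efficiency verification and the final identification of the unique axiom-compliant allocation with the explicit formula turn. Everything else reduces to applying the structural axioms to the unanimity basis.
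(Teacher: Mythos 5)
Your proposal is a correct and complete proof of Theorem \ref{thm:shapley}, but there is nothing in the paper to compare it against line-by-line: the paper does not prove this theorem, it quotes it from \citet{Shapley1953} (via \citet{StTe2019}). The only point of contact is the random-order representation \eqref{eqn:shapleyPermutation}, which the paper states without proof and which is exactly the probabilistic recasting you use for the existence half; your count that $\lvert S\rvert!\,(N-1-\lvert S\rvert)!$ of the $N!$ orderings place precisely $S$ before player $i$ is right, the telescoping argument gives efficiency, and the swap involution gives symmetry (the hypothesis covers both cases $j \notin P_i^\pi$ and $j \in P_i^\pi$, which is worth spelling out). Your uniqueness half --- unanimity games $u_T$, M\"obius inversion, and the observation that null-player, symmetry, and efficiency force $\phi_i(u_T) = 1/\lvert T\rvert$ for $i \in T$ and $0$ otherwise --- is the classical argument and is sound. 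It is instructive to contrast this with how the paper proves uniqueness for its own extension (Theorem \ref{main}): there the unanimity basis is replaced by the Kronecker basis games $\delta_{S,N}$ together with the two-point games $\Delta_{(S, S\cup\{i\})}$ and an induction on $N$, because the component-game axioms {\bf A2}, {\bf A3}, {\bf A5} constrain values at every coalition rather than a single number per player, and the unanimity games no longer reduce the problem to one symmetric parameter. So your route buys a short, self-contained proof of the classical statement, while the paper's inductive, basis-by-basis method is the one that scales up to the Hodge-theoretic setting.
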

The four conditions represent distinct fairness criteria. Specifically, [efficiency] indicates that the value obtained by the grand coalition is fully distributed among the players; [symmetry] indicates that equivalent players receive equal amounts; [null-player] indicates that a player who contributes no marginal value to any coalition receives nothing; and [linearity] indicates that the allocation is linear in terms of game values. These four conditions are known as the \emph{Shapley axioms}, the vector $\bigl( \phi _i (v) \bigr) _{ i \in N } $ is referred to as the {\em Shapley value}, and \eqref{eqn:shapley} is denoted as the {\em Shapley formula}. 

The Shapley formula \eqref{eqn:shapley} has a powerful probabilistic interpretation. Assume the players form the grand coalition one at a time, following an order determined by a random permutation $\sigma$ of $N$, such that at each stage every remaining player has an equal chance of being the next to join. In any such sequence, player $i$ joins the coalition $ S^{\sigma}_i= \bigl\{ j \in N : \sigma (j) < \sigma (i) \bigr\}$ that has already formed, contributing the marginal value $ v \bigl( S^{\sigma}_i \cup \{ i \} \bigr) - v (S^{\sigma}_i) $.  Then $ \phi _i (v) $ is precisely the average marginal value contributed by player $i$ over all 
possible permutations: 
\begin{align}
  \label{eqn:shapleyPermutation}
  \phi _i (v) = \frac{ 1 }{ |N| ! } \sum _\sigma \Bigl( v \bigl( S^{\sigma}_i \cup \{ i \} \bigr) - v (S^{\sigma}_i ) \Bigr). 
\end{align}
The well-known glove game below illustrates this formula in a simple context.
\begin{example}[Glove game]\label{ex:introGlove}
Let $ |N| = 3 $. Suppose player $1$ has a left-hand glove, while players $2$ and $3$ each have a right-hand glove. A pair of gloves has value $1$, while unpaired gloves have no value. This defines the game where $ v (S) = 1 $ if $S$ contains player $1$ and at least one of players $2$ or $3$, and $ v (S) = 0 $ otherwise. The Shapley values are:
  \begin{equation*}
\phi _1 (v) = \tfrac{ 2 }{ 3 } , \qquad \phi _2 (v) = \phi _3 (v) = \tfrac{ 1 }{ 6 }.
  \end{equation*}
This is easily seen from \eqref{eqn:shapleyPermutation}: player $1$ contributes marginal value $0$ only when joining first (which happens in 2 of the 6 permutations) and marginal value $1$ otherwise (in the other 4 permutations), so $ \phi _1 (v) = \frac{4}{6} = \tfrac{ 2 }{ 3 } $. Efficiency and symmetry then yield $\phi _2 (v) = \phi _3 (v) = \tfrac{ 1 }{ 6 }$. 
\end{example}
We note that the Shapley value can be readily applied to each coalition $T \subset N$ by employing the Shapley formula to the subgame $v \big|_T$\footnote{$v \big|_T : 2^T \to \R$ denotes the restriction of $v$ to the subsets of $T$, i.e., $v \big|_T(S) = v(S)$ for all $S \subset T$.}. This allocation scheme is termed the {\em extended Shapley value}. In the subsequent sections, we explore an alternative cooperative value allocation scheme that seamlessly extends to all coalitions $2^N$ and differs from the Shapley value for partial coalitions $T \subsetneq N$. 

\section{Stern-Tettenhorst's extension of the Shapley value via the graph Poisson's equation}\label{STsection}

Consider the (undirected) hypercube graph, or coalition game graph $G=(\cV, \cE)$, where $\cV$ denotes the set of nodes and $\cE$ the set of edges. This graph is defined by
\begin{equation}\label{oldG}
  \cV := 2 ^{N} = \{ S \mid S \subset N\}, \ \  \cE := \bigl\{ \bigl(  S, S \cup \{ i \} \bigr) \in \cV \times \cV \ | \  S \subset N \setminus \{ i \} ,\ i \in N \bigr\}. 
\end{equation}
Notice that each coalition $S \subset N$ corresponds to a vertex of the unit hypercube in $\R^{N}$. We assume that each edge is oriented in the direction of set inclusion $ S \hookrightarrow S \cup \{ i \}$. We also define the set of reverse (negatively-oriented) edges (see Figure \ref{figure1}):
\be
\cE_- := \bigl\{ \bigl(S \cup \{ i \}, S \bigr) \in \cV \times \cV \ | \  S \subset N \setminus \{ i \} ,\ i \in N \bigr\}. 
\ee
The edges in $\cE$ are termed forward or positively-oriented edges. We set $\overline \cE = \cE \cup \cE_-$.
\begin{figure}[h]
\centering
\begin{subfigure}{.5\textwidth}
  \centering
  \includegraphics[width=.8\linewidth]{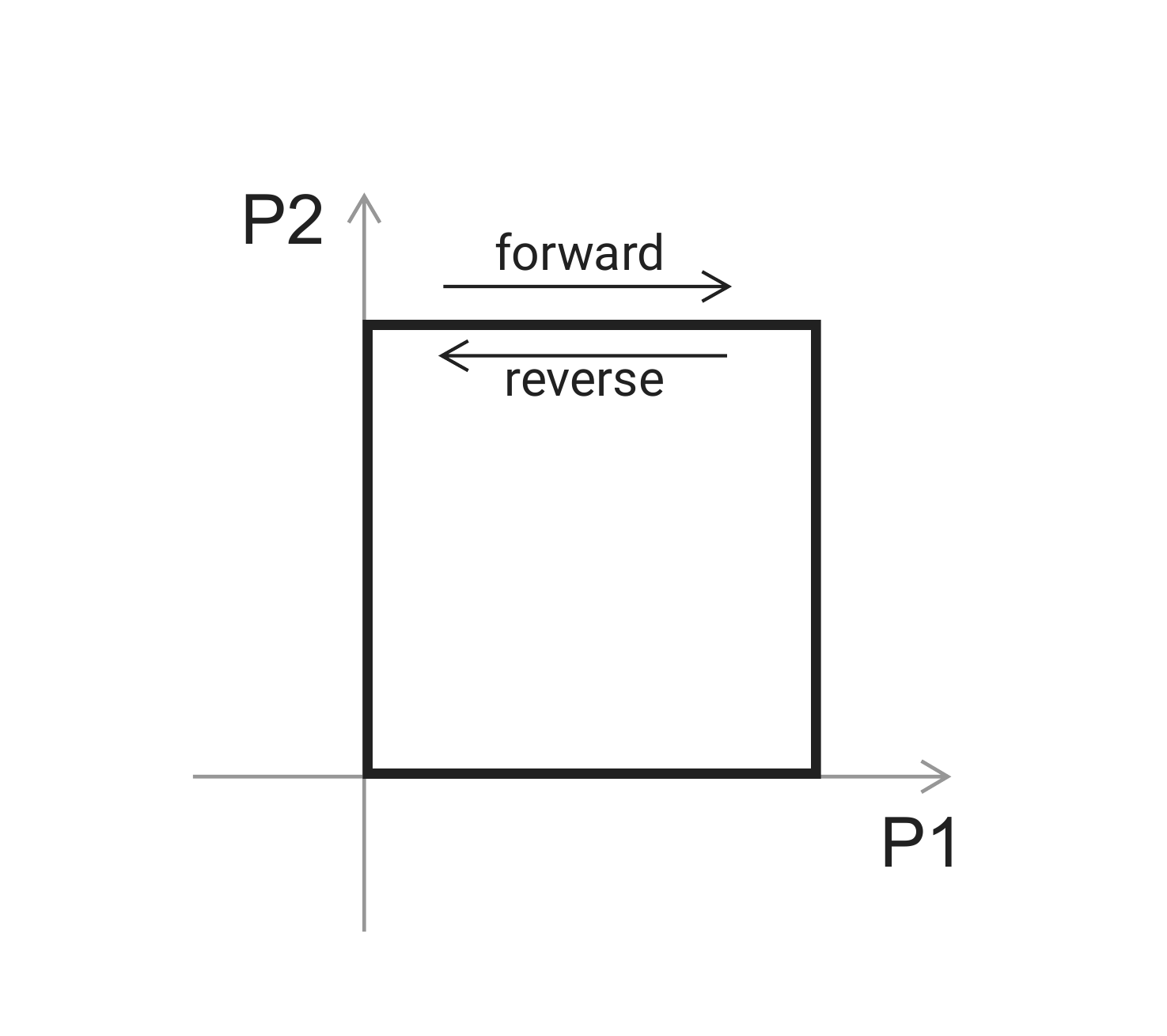}
\end{subfigure}%
\begin{subfigure}{.5\textwidth}
  \centering
  \includegraphics[width=.8\linewidth]{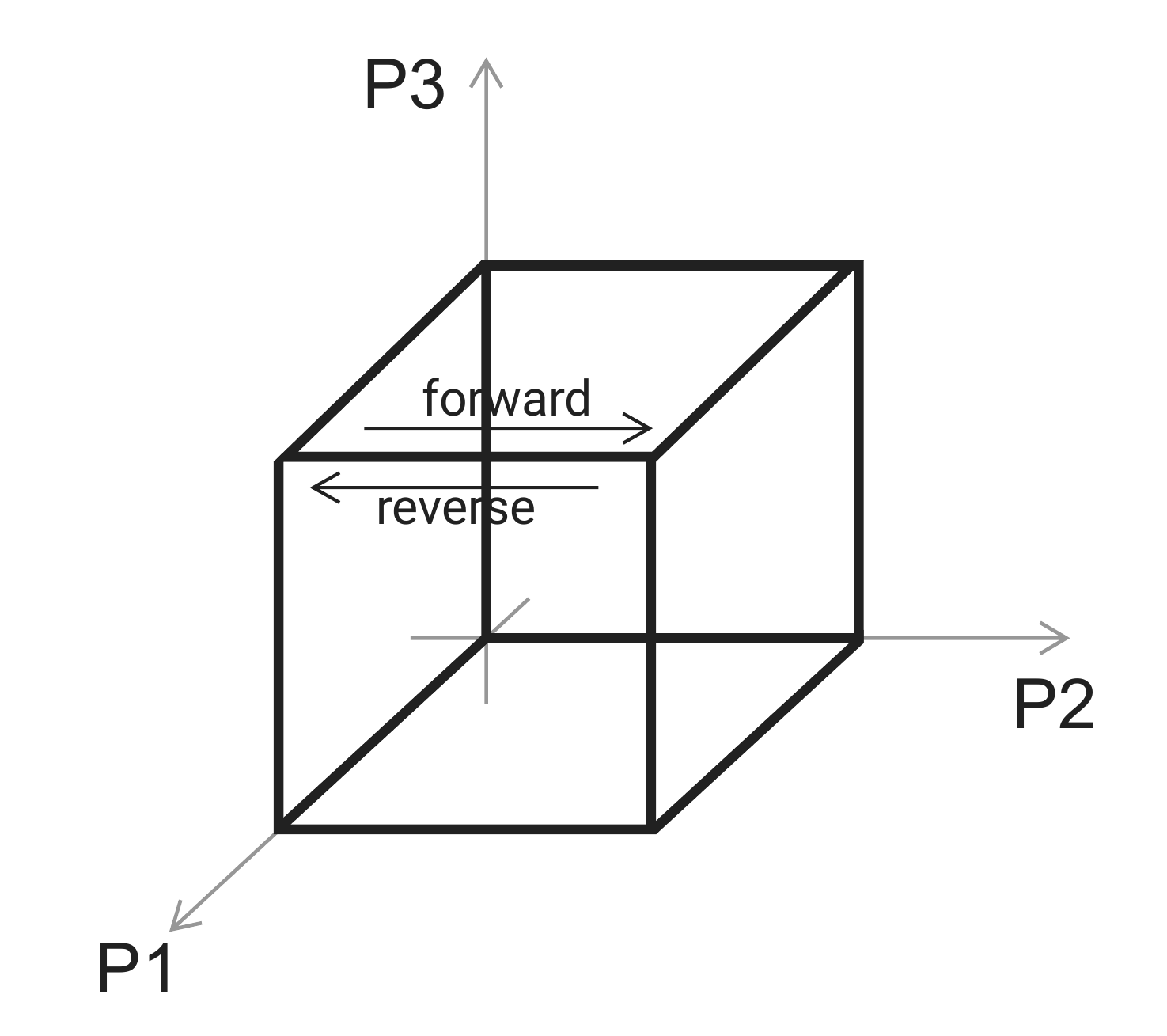}
\end{subfigure}
\caption{Coalition game graphs for $N=2$ and $N=3$. Each vertex of the cube corresponds to a coalition. The vertex $(1,0,1)$, for example, corresponds to the coalition $\{1,3\}$, and $(0,1,1)$ corresponds to $\{2,3\}$.} 
\label{figure1}
\end{figure}

For each game $v \in \cG_N$ and coalition $S \subset N$, let $\Phi_i(v,S)$ represent the value assigned to player $i$ when the players constitute coalition $S$. We assume $\Phi_i(v, \varnothing) = 0$ for all $i \in N$. Unlike the extended Shapley value, we do {\em not} require $\Phi_i(v,S) = 0$ for $i \notin S$. This allows players outside a coalition to be assigned a nonzero value, either positive or negative. For instance, if $N$ is the set of founding members of a company and a subset $S$ wishes to continue managing it, $S$ might need to provide compensation to the departing members $N \setminus S$, resulting in a nonzero value for them. This value could even be negative if a departing member caused damage to the company.

\citet{stern2019hodge} propose that for any $S \in 2^N$, the values $\Phi_i(v,S)$ must satisfy the following system of equations:
\begin{align}\label{STPoisson}
&\Phi_i(v,S) - \frac{1}{|N|} \sum_{T \sim S} \Phi_i(v,T) = \frac{1}{|N|} \sum_{T \sim S} \p_i v(T,S),
\end{align}
where $T\sim S$ indicates that $T$ is an adjacent coalition to $S$ in the graph $G$, and where for each $ i \in N $, the {\em partial differential} $\p_i v : \overline \cE \to \R$ of a game $v$ is defined as 
  \begin{equation}\label{partialdifferential}
    \p_i v \bigl( S , S \cup \{ j \} \bigr) :=
    \begin{cases}
v (S \cup \{ i \}) - v(S) & \text{if } \ j=i, \\
      0 & \text{if } \  j \neq i,
    \end{cases}
    \end{equation} 
and $\p_i v \bigl(S \cup \{ j \}, S \bigr) := - \p_i v \bigl( S , S \cup \{ j \} \bigr)$. Thus, $ \p_i v$ represents the marginal contribution of player $i$ to the game $v$, accounting for both joining and leaving a coalition. Equation \eqref{STPoisson} asserts that the deviation of player $i$'s payoff from the average of its neighbors must equal the average marginal contribution made by that player across the same neighborhood. Note that \eqref{STPoisson} is a form of Poisson's equation, since its left-hand side is proportional to $L \Phi_i (S)$, where $L$ is the graph Laplacian on $G$.  The main result of \citep{stern2019hodge} is the following theorem, where they denote $v_i(S) = \Phi_i(v,S)$ and call the function $v_i: 2^N \to \R$ a {\em component game}. 
\begin{theorem}[\cite{stern2019hodge}, Theorem 3.4]
  \label{thm:gameDecomp}
The component games $ (v_i)_{i \in N}$ solving \eqref{STPoisson} with $v_i(\varnothing)=0$ satisfy the following:
  \begin{enumerate}[label=(\alph*)]
\item $ \displaystyle\sum _{ i \in N } v _i = v $. 
\item If $ v \bigl( S \cup \{ i \} \bigr) - v (S) = 0 $ for all $ S \subset N \setminus \{ i \} $, then $ v _i = 0 $. 
\item If $ \sigma $ is a permutation of $N$ and $ \sigma ^\ast v $ is the game defined by $ ( \sigma ^\ast v ) (S) = v \bigl( \sigma (S) \bigr) $, then $ ( \sigma ^\ast v ) _i = \sigma ^\ast ( v _{ \sigma (i) } ) $.
In particular, if $ \sigma $ is a permutation swapping $i$ and $j$ and if $ \sigma ^\ast v = v $, then $ v _i = \sigma ^\ast (v _j) $.
\item For any two games $ v , v ^\prime $ and $ \alpha, \alpha ^\prime \in \mathbb{R} $, $ ( \alpha v + \alpha ^\prime v ^\prime ) _i = \alpha v _i + \alpha ^\prime v ^\prime _i $.
  \end{enumerate}
Consequently, $ v _i (N) = \phi _i (v) $ is the Shapley value for each player $i \in N $.
  \end{theorem}

For example, calculating $(v_i)_i$ for the glove game from Example \ref{ex:introGlove} yields the following value table.
\be\label{glovegameextended}
\noin
\begin{tabularx}{0.913\textwidth}
{ 
  | >{\centering\arraybackslash}X 
  | >{\centering\arraybackslash}X 
  | >{\centering\arraybackslash}X 
  | >{\centering\arraybackslash}X 
  | >{\centering\arraybackslash}X 
  | >{\centering\arraybackslash}X 
  | >{\centering\arraybackslash}X 
  | >{\centering\arraybackslash}X 
  | }
\hline 
 & $\{1\}$ & $\{2\}$ & $\{3\}$ & $\{1,2\}$ & $\{1,3\}$ & $\{2,3\}$ & $\{1,2,3\}$ \\ [0.5ex]
\hline 
$v_1$ & $\frac{5 }{ 12}$ & $ -\frac{ 5}{ 24}$ & $ -\frac{ 5}{ 24}$ & $\frac{5 }{ 8} $ &$ \frac{5 }{ 8}$ &$ -\frac{ 1}{ 4} $ & $\frac{2 }{ 3} $ \\ [0.5ex]
\hline 
$v_2$ &$-\frac{ 5}{ 24} $ & $\frac{1 }{6 } $ & $  \frac{1 }{ 24} $ & $  \frac{ 3}{ 8}  $ & $0 $ & $\frac{1 }{8} $ & $\frac{1 }{ 6} $ \\ [0.5ex]
\hline
$v_3$ & $-\frac{ 5}{ 24} $ & $ \frac{1 }{ 24} $ & $\frac{1 }{6 }$ & $ 0$ & $ \frac{ 3}{ 8}$ & $\frac{1 }{8}$ & $\frac{1 }{ 6} $ \\ [0.5ex]
\hline
\end{tabularx}
\nn
\ee
The final column indeed corresponds to the Shapley value. The properties in Theorem \ref{thm:gameDecomp} are clearly inspired by the Shapley axioms and serve as a natural extension, successfully recovering the Shapley value at the grand coalition $N$. However, unlike the classic axioms, they are incomplete: they do not uniquely characterize the value $\Phi = \big(\Phi_i(\cdot, \cdot)\big)_{i \in N}$ for all partial coalitions. This motivates our first main result: to give a finite set of properties that fully characterize $\Phi$, which we present in Section \ref{newaxiom}. Our second objective is to determine if a probabilistic interpretation of $\Phi$ exists for any partial coalition $S$, analogous to the Shapley formula. This representation, which involves a random walk on the graph $G$ and a path integration, is presented in Section \ref{newformula}.

\section{Characterization of $\Phi$}\label{newaxiom}

The properties listed in Theorem \ref{thm:gameDecomp} are insufficient to uniquely determine the value function $\Phi$ for all coalitions. Inspired by the long tradition of seeking axiomatic foundations for allocation rules, we introduce a new set of five axioms that provide a complete characterization. Our observation is that a new condition, which we call ``independency," is essential to complement the natural extensions of the classic Shapley axioms. 

To formally state the axioms, we first define some notation. Let $\cal U$ be a countably infinite universe of players, and let $\cN$ be the set of all finite subsets of $\cal U$. The set of all coalition games is $\cG = \bigcup_{N \in \cN} \cG_N$. For players $i,j \in N$ and a coalition $S \subset N$, we define $S^{ij}$ as the coalition obtained by switching the roles of $i$ and $j$ in $S$:
  \begin{equation*}
S^{ij}=
    \begin{cases}
      S & \text{if } \ \{i,j\} \cap S = \varnothing  \, \text{ or } \,  \{i,j\} \subset S, \\
      S \cup \{i\} \setminus \{j\} & \text{if } \ i \notin S \,\text{ and } \, j \in S, \\ 
S \cup \{j\} \setminus \{i\} & \text{if } \ j \notin S \, \text{ and } \, i \in S.
    \end{cases}
  \end{equation*} 
Similarly, for a game $v \in \cG_N$, we define the swapped game $v^{ij} \in \cG_N$ by $v^{ij}(S) = v(S^{ij})$. Intuitively, the contributions of players $i$ and $j$ in game $v$ are interchanged in game $v^{ij}$. Finally, for $i \in N$, let $v_{-i} : 2^{N \setminus \{i\}} \to \R$ be the game $v$ restricted to coalitions not containing $i$, i.e., $v_{-i}(S) = v(S)$ for all $S \subset N \setminus\{i\}$. We now present our five axioms.
 \vspace{1mm}

{\rm {\bf A1} (Efficiency):} $v (S) = \sum_{i \in N} \Phi _i (v,S)$ for any $v \in \cG_N$ and $S \subset N$. \vspace{1mm}

{\rm {\bf A2} (Linearity):} For any $v, v' \in\cG_{N}$, $\al, \al' \in \R$, and $S \subset N$, it holds
\[
\Phi _i (\al v + \al' v', S) = \al \Phi _i (v,S) + \al' \Phi _i (v',S). 
\]
 
{\rm {\bf A3} (Symmetry):} $ \Phi _i (v^{ij}, S^{ij}) =\Phi _j (v, S) $ for all $v \in \cG_N$, $i,j \in N$, and $S \subset N$. \vspace{1mm}
 
Axiom A3 can be interpreted as follows: if the roles of players $i$ and $j$ are interchanged in the game, their payoffs switch accordingly for corresponding coalitions.  \vspace{1mm}

{\rm {\bf A4} (Null-player):} For any $v \in \cG_N$ and $i \in N$, if $\p_i v \equiv 0$, then  
\be
\Phi_j(v, S \cup \{i\}) = \Phi_j(v,S) =\Phi_j(v_{-i},S) \ \text{ for all }  j \in N \setminus \{i\} \text{ and }  S \subset N \setminus \{i\}. \nn
\ee
A4 states that if player $i$ provides no marginal value to any coalition, then the reward for any other player $j$ is independent of player $i$'s participation and is identical to their reward in the game played only by $N\setminus\{i\}$. Combined with A1, this axiom implies that a null player receives nothing: $\Phi_i (v, S)= 0$ for all $S \subset N$. 

Axioms A1–A4 are natural extensions of the classic Shapley axioms and are sufficient to determine the value $\Phi_i (v, N)$ as the Shapley value. However, they do not uniquely determine $\Phi$ for all partial coalitions. The final axiom provides the missing constraint.
\vspace{1mm}

{\rm {\bf A5} (Independency):} For every $v \in \cG_N$ and $i \in N$, the mapping
\be
S \mapsto \frac{\Phi _i (v,S) + \Phi _i (v, S \cup \{ i \})}{2} \, \text{ is constant over all } S \subset N \setminus \{i\}.  \nn
\ee
A5 states that for any player $i$, their average value across a state where they are not in a coalition ($S$) and an adjacent state where they are in it ($S \cup \{i\}$) is independent of the specific coalition $S$. 

We now present our first main theorem. To the best of our knowledge, this is the first result providing a characterization of solutions to Poisson’s equation on graphs.

\begin{theorem}\label{main}
There exists a unique mapping $\Phi = (\Phi_i)_{i \in N} :  \cG_N \times 2^N \to \R^{|N|}$ that satisfies axioms {\bf A1–A5} for all $N \in \cN$ and the initial condition $\Phi(v, \varnothing) = {\bf 0}$. This unique map $\Phi$ is the solution to the Poisson's equation \eqref{STPoisson}. 
\end{theorem}

\section{Probabilistic representation of $\Phi$ via diffusion path integration}\label{newformula}

The classic Shapley formula \eqref{eqn:shapleyPermutation} assumes a coalition formation process that is strictly increasing, with players joining one by one in a uniformly random order. We now consider a more general process by defining a diffusion (random walk) on the coalition space $\cV$. Let $(X_t)_{t \in \N_0}$ be a Markov chain on the state space $\cV$ starting at $X_0 = \varnothing$, with transition probabilities given by:
 \begin{align}\label{MC}
p_{S,T} := 1/|N|
\ \text{ if } \ T \sim S, \q  p_{S,T} := 0 \ \text{ if } \ T \not\sim S. 
\end{align} 
This transition law can be interpreted as a baseline model where, at any point in time, each player is equally likely to change their status by either joining or leaving the current coalition. Critically, this process allows players not only to join but also to leave, meaning transitions from $S$ to $S \setminus \{i\}$ are possible.

Let $(\Omega, \cF, \cP)$ be the underlying probability space. For each target coalition $S \in \cV$ and each sample path $\omega \in \Omega$, let $\tau_{S} = \tau_{S}(\omega)$ be the first hitting time of state $S$, i.e., the first time $t \ge 1$ such that $X_{t}(\omega) = S$.
\begin{figure}[h]
\centering
\begin{subfigure}{.5\textwidth}
  \centering
  \includegraphics[width=.80\linewidth]{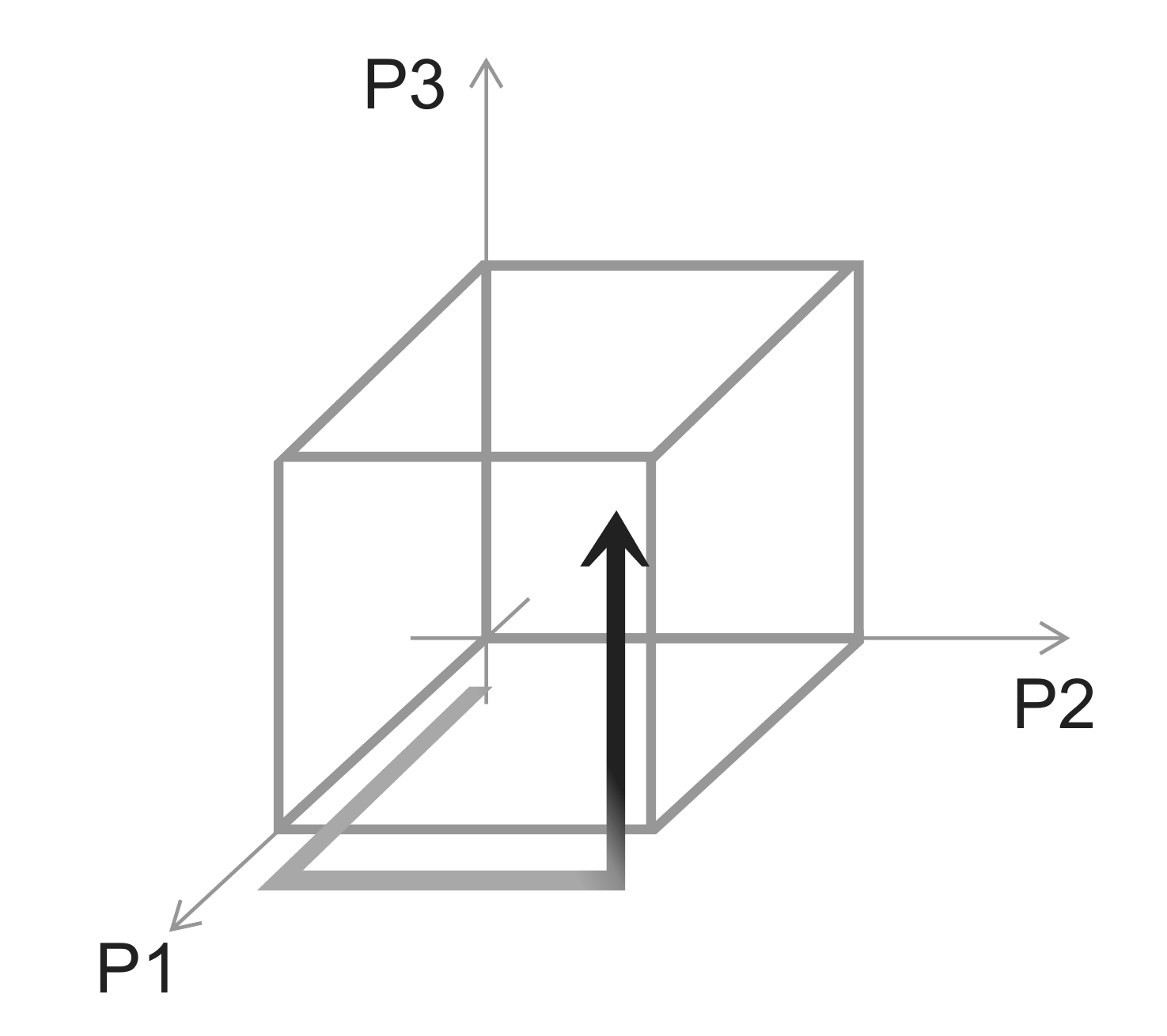}
\end{subfigure}%
\begin{subfigure}{.5\textwidth}
  \centering
  \includegraphics[width=.80\linewidth]{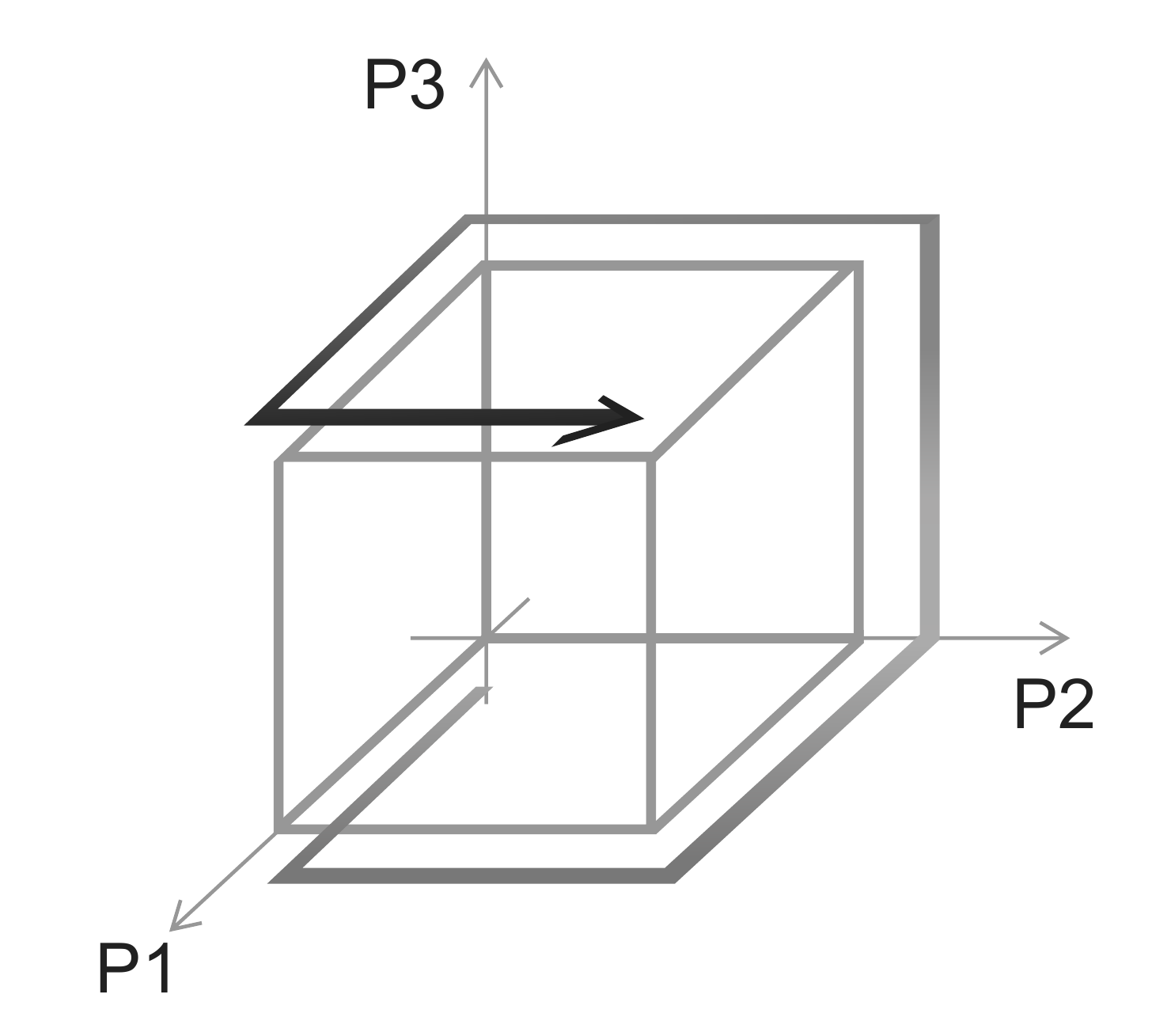}
\end{subfigure}
\caption{An increasing coalition path as in the Shapley formula (left) and a more general coalition path from our random walk model (right).}
\label{figure2}
\end{figure}

Given a game $v \in \cG_N$, we can calculate the total marginal contribution of player $i$ along a sample path $\omega$ that travels from $\varnothing$ to $S$ using the following path integral:
\be\label{pathintegral}
{\cal I}_i(v, S) = {\cal I}_i(v, S)(\omega) := \sum_{t=1}^{\tau_{S}(\omega)} \p_i  v \big(X_{t-1}(\omega), X_t(\omega) \big).
\ee
Here, $\p_i v$ is the marginal contribution of player $i$ as defined in \eqref{partialdifferential}, capturing gains from joining and losses from leaving. Thus, ${\cal I}_i(v, S)$ represents player $i$'s net contribution accumulated along the specific path $\omega$ until the coalition first reaches state $S$. By averaging this quantity over all possible paths, we define a new value function:
\be\label{value}
\Psi_i (v, S) := \E[ {\cal I}_i(v, S)] = \int_\Omega  {\cal I}_i(v, S)(\omega)\, d \cP(\omega).
\ee
This value, $\Psi_i(v, S)$, represents player $i$'s expected total contribution, given that the coalition formation process begins at $\varnothing$ and eventually reaches the state $S$. Our second main result states that this probabilistically defined value is identical to the axiomatically characterized value from the previous section.

\begin{theorem}\label{coincide}
$\Phi = \Psi$. That is, $\Phi_i(v,S) = \Psi_i(v,S)$ for all $v \in \cG_N$, $S \subset N$, and $i \in N$. In particular, the classic Shapley value is recovered for the grand coalition: $\phi_i(v) = \Psi_i(v,N)$ for every $v \in \cG_N$ and $i \in N$. 
\end{theorem}

Theorem \ref{coincide} provides a novel interpretation of the Shapley value: $\phi_i(v)$ is agent $i$'s expected total contribution when a random coalition process, allowing both entry and exit, terminates upon reaching the grand coalition $N$. Even in this special case, the underlying formulas \eqref{eqn:shapleyPermutation} and \eqref{value} are structurally different. The classic Shapley formula is a finite sum over $|N|!$ deterministic, strictly increasing paths, whereas \eqref{value} is an infinite sum over all possible random paths, including those that are non-monotonic. 

\begin{example}\label{twoperson}
We demonstrate a direct calculation of the value $\Psi$ for a general two-person game. Let $v_1 = v(\{1\})$, $v_2 = v(\{2\})$, and $v_{12} = v(\{1,2\})$. The classic Shapley formula \eqref{eqn:shapleyPermutation} yields
\be
\phi_1(v) = \tfrac{1}{2}(v_1 + v_{12} - v_2), \q \phi_2(v) = \tfrac{1}{2}(v_2 + v_{12} - v_1). \nn 
\ee
To calculate $\Psi_1(v, N)$ where $N=\{1,2\}$, we consider all paths from $\varnothing$ to $N$. Due to the sign-changing property of $\p_i$ (i.e., $\p_i v \bigl(B, A \bigr) = - \p_i v \bigl( A, B \bigr)$), many terms in the path integral cancel. For example, any path segment like $(\dots, S, S \cup \{i\}, S, \dots)$ contributes nothing to player $i$'s total. It turns out that for any path $\omega$ from $\varnothing$ to $N$, the path integral $\cI_1(\omega)$ is either $v_1$ (if the path hits $\{1,2\}$ after $\{1\}$) or $v_{12}-v_2$ (if it hits $\{1,2\}$ after $\{2\}$). The probabilities of these two events are equal. Thus,
\[
\Psi_1(v, N) = \tfrac{1}{2}(v_1) + \tfrac{1}{2}(v_{12}-v_2) = \phi_1(v).
\]
A similar argument shows $\Psi_2(v, N)=\phi_2(v)$. 

Now, let the terminal coalition be $\{1\}$. We compute $\Psi_1(v, \{1\})$ and $\Psi_2(v, \{1\})$.
\be
\begin{tabularx}{0.9\textwidth}{bss}
\hline
Path $\omega$ from $\varnothing$ to $\{1\}$ & $\cI_1(\omega)$ & $\cI_2(\omega)$ \\ \hline
$(\varnothing, \{1\})$ & $v_1$ & $0$ \\ 
$(\varnothing, \{2\}, \varnothing, \{1\})$ & $v_1$ & $0$ \\ 
$(\varnothing, \{2\}, \{1,2\}, \{1\})$ & $v_{12}-v_2$ & $v_2 - (v_{12}-v_1)$ \\ 
$(\varnothing, \{2\}, \varnothing, \{2\}, \varnothing, \{1\})$ & $v_1$ & $0$ \\ 
$\dots$ & $\dots$ & $\dots$ \\ \hline
\end{tabularx}
\nn
\ee
The transition probability at each step is $1/2$ under the law \eqref{MC}. Summing the contributions weighted by their path probabilities (= $(\frac12)^{\text{(\# of steps)}}$) gives:
\begin{align*}
\Psi_1(v, \{1\}) &= \tfrac{1}{2} v_1 + (\tfrac{1}{2})^3 (v_1 + v_{12} - v_2) + (\tfrac{1}{2})^5 \cdot 2 (v_1 + v_{12} - v_2) + \dots \\
&= \tfrac{1}{2} v_1 + \left( \sum_{k=1}^\infty (\tfrac{1}{2})^{2k+1} 2^{k-1} \right) (v_1 + v_{12} - v_2) \\
&= \tfrac{1}{2} v_1 + \tfrac{1}{4} (v_1 + v_{12} - v_2) = \tfrac{1}{4}(3v_1 - v_2 + v_{12}). 
\end{align*}
The complete value table for the two-person game is:
\be\label{twopersontable}
\centering
\begin{tabularx}{0.9\textwidth}
{ 
  | >{\centering\arraybackslash}X
  | >{\centering\arraybackslash}X 
  | >{\centering\arraybackslash}X 
  | >{\centering\arraybackslash}X 
  | }
\hline 
 & $\{1\}$ & $\{2\}$ & $\{1,2\}$ \\ [0.5ex]
\hline 
$\Psi_1$ & $\frac14(3v_1 -v_2 + v_{12})$  & $\frac14(v_1+v_2-v_{12})$ & $\frac12(v_1-v_2+v_{12})$  \\ [0.5ex]
\hline 
$\Psi_2$ & $\frac14(v_1+v_2-v_{12})$  & $\frac14(3v_2 -v_1 + v_{12})$ & $\frac12(v_2-v_1+v_{12})$  \\ [0.5ex]
\hline
\end{tabularx}
\nn
\ee
\end{example}

We recall that in the glove game, some negative values were assigned. For instance, $v_1(\{2,3\}) = -\tfrac{1}{4}$, whereas $v_2(\{2,3\}) = v_3(\{2,3\}) = \tfrac{1}{8}$. This phenomenon can be explained as follows. The coalition $\{2,3\}$ has three possible immediate predecessors: $\{2\}$, $\{3\}$, and $\{1,2,3\}$. Among these, the transition $\{1,2,3\} \to \{2,3\}$---corresponding to player 1 leaving the grand coalition---induces a drop in the coalition value from $v(\{1,2,3\}) = 1$ to $v(\{2,3\}) = 0$. This decline is represented by $\partial_1 v\big((\{1,2,3\}, \{2,3\})\big) = -1$,
and the resulting loss is partially attributed to player 1, yielding her assigned value of $-\tfrac{1}{4}$.

While direct calculation of $\Psi$ using path enumeration is feasible for small games, it becomes challenging as $|N|$ increases. Theorem \ref{coincide} establishes a crucial link: the value $\Psi$, defined by an infinite sum over random paths, can be efficiently computed by solving the system of linear equations \eqref{STPoisson} that defines $\Phi$. This duality provides both a practical computation method and a deep probabilistic meaning for this extended Shapley value based on the combinatorial Hodge theory.

\section{Conclusion}\label{conclusion}
In this paper, we have developed a complete theoretical foundation for the Hodge-theoretic cooperative game value proposed by Stern and Tettenhorst. Their innovative framework extended the concept of player value to all partial coalitions but lacked a full axiomatic characterization and a probabilistic interpretation, two cornerstones of the classic Shapley value's broad acceptance.

Our work resolves these two critical gaps. First, we introduced a new set of five axioms—efficiency, linearity, symmetry, an extended null-player condition, and a novel independency axiom—and proved in Theorem \ref{main} that they uniquely characterize the value solution for any game and any coalition. This axiomatic framework provides a theoretical foundation for this value as a principle of allocation across the entire coalition lattice, not just for the grand coalition.

Second, we formulated a new probabilistic representation for the value in Theorem \ref{coincide}. We showed that the value of a player in any given coalition is equal to their expected total marginal contribution accumulated along a random walk on the coalition graph. This path integral formulation generalizes the classic Shapley formula from a model of deterministic, ordered coalition growth to a more flexible and dynamic process where players can both join and leave coalitions. This result provides a clear, intuitive meaning to the value assigned at each partial coalition state.

By providing both a unique axiomatic basis and a compelling probabilistic interpretation, our findings establish the Hodge-theoretic value as a robust and canonical extension of the Shapley value. This dual perspective solidifies its standing as a principled method for allocating value in cooperative settings where the formation of the grand coalition is not guaranteed. Future research could focus on developing efficient computational algorithms for larger games and applying this comprehensive framework to practical problems in economics, political science, and machine learning, where assessing contributions within partial alliances is crucial.

\section{Proof of Theorem \ref{main} and Theorem \ref{coincide}}\label{proofs}
We introduce basic linear spaces and operators in combinatorial Hodge theory.  Let $\ell^2(\cV)$ denote the space of functions  $ \cV \rightarrow \mathbb{R} $ with the inner product
\begin{equation}\label{inner1}
   \langle u , v \rangle \coloneqq \sum _{ S \in \cV }  u (S) v (S).
\end{equation}
We recall that these are called coalition games if $\cV = 2^N$ and $v(\varnothing) = 0$. In contrast, $\cV$ can now be an arbitrary finite set and $v \in \ell^2(\cV)$ is not required to assume $0$ anywhere. 

Let $ \ell ^2 (\cE) $ denote the space of functions
$ \overline{\cE} \rightarrow \mathbb{R} $ equipped with the inner product
\begin{equation}\label{inner2}
   \langle f , g \rangle \coloneqq \sum _{ (S,T) \in \cE } f (S,T) g (S,T)
\end{equation}
with the sign changing property $ f(T,S) = - f (S,T) $. Observe that $\p_i v \in \ell ^2 (\cE) $. 

Now the gradient operator $ \mathrm{d} \colon \ell ^2 (\cV) \rightarrow \ell ^2 (\cE) $ is defined by
\begin{equation}\label{gradient}
  \mathrm{d} v ( S,T ) \coloneqq v (T) - v (S) \ \text{ for each } (S,T) \in \overline{\cE}.
\end{equation}
For a function $v$ on $\cV$, $ \mathrm{d} v$ measures its marginal value for each edge $(S,T) \in \overline\cE$. 

Let $ \mathrm{d}^* \colon \ell ^2 (\cE) \rightarrow \ell^2 (\cV) $ denote the adjoint of $ \mathrm{d}$. $ \mathrm{d}^*$ is called the divergence operator, which is characterized by the defining relation for the adjoint operator
\be\label{adjointdefinition}
   \langle \mathrm{d}v, f \rangle_{ \ell ^2 (\cE)} =    \langle v, \mathrm{d}^*f \rangle_{ \ell ^2 (\cV)} \ \text{ for every } v \in \ell^2(\cV) \text{ and } f \in \ell^2(\cE).
\ee
The adjoint property yields the following explicit form of the divergence
\begin{align}\label{div}
\mathrm{d}^*f (S) 
=  \sum_{T \sim S} f(T,S) \ \text{ for every } S \in \cV,
\end{align}
where $ T \sim S $ indicates $S$ and $T$ are adjacent by an edge in the graph $G$. 

The Laplacian is the symmetric (self-adjoint) operator $ \mathrm{L} = \mathrm{d} ^\ast \mathrm{d} : \ell^2(\cV) \to \ell^2(\cV)$:
\begin{align*}
\mathrm{L} v (S) =  \sum_{T \sim S}\big( v(S) - v(T) \big).
\end{align*}
$\mathrm{L} v(S)$ calculates the sum of $v$'s marginal increment directed to each state $S$. 

With these and the notation $v_i (S) = \Phi_i(v, S)$ for all $S \subset N$, the balance equation \eqref{STPoisson} can now be written as the following Poisson's equation
\be\label{ls1}
\mathrm{L} v_i = \mathrm{d}^* \p_i v \ \text{ with } v_i(\es) = 0, \ \text{ for each } i \in N.
\ee

\begin{proof}[Proof of Theorem \ref{main}] 
First, we claim that A1--A5 determines the operator $\Phi$ uniquely (if exists). For each player set $N$, define games $\delta_{S,N} \in \cG_N$ for each $ \es \neq S \subset N$ by
\[
\delta_{S,N}(S) = 1, \q 
      \delta_{S,N}(T)= 0 \,\text{ if } \, T \neq S.
\]
We proceed by an induction on $|N|$. The case $|N|=1$ is already from A1. Suppose the claim holds for $|N|-1$, so $\Phi_j(\delta_{S, N \setminus \{i\}}, \cdot)$ are determined for all $i,j \in N$ and $\es \neq S \subset N \setminus \{i\}$. Define the games $\Delta_{(S, S \cup \{i\})} \in \cG_N$ for each $\es \neq S \subset N \setminus \{i\}$ by 
\[
\Delta_{(S, S \cup \{i\})}(T) = 1 \, \text{ if } \, T=S \text{ or } T=S \cup \{i\}, \q \Delta_{(S, S \cup \{i\})}(T) =0 \, \text{ otherwise.}
\]
Notice then A4 (and induction hypothesis) determines $\Phi$ for all $\Delta_{(S, S \cup \{i\})} \in \cG_N$. Then thanks to A2, to prove the claim, it is enough to show that A1--A5 can determine $\Phi$ for the pure bargaining game $\delta:= \delta_{N,N}$, because for any $\es \neq S \subset N$, we can write $\delta_{S,N}$ as the following sign-alternating sum
\be
\delta_{S,N} = \Delta_{(S, S \cup \{i_1\})} - \Delta_{(S \cup \{i_1\}, S \cup \{i_1, i_2\})} + \Delta_{(S \cup \{i_1, i_2\}, S \cup \{i_1, i_2, i_3\})} - \dots \pm \delta_{N,N}. \nn
\ee
By A3, $\sum_{S \subset N} \Phi_i(\delta,S)$ is constant for all $i \in N$, thus equals $ 1/|N|$ by A1. Define 
\[
u_i (S) := \Phi_i(\delta,S) - \frac{1}{|N| 2^{|N|}} \ \text{ for all } \ S \subset N
\]
so that $u_i (\varnothing) = -\frac{1}{|N| 2^{|N|}}$ and $\sum_{S \subset N} u_i(S)=0$ for all $i$. Now observe A5 implies:
\be
u_i(S) + u_i(S \cup \{i\}) \text{ is constant for all } S \subset N \setminus \{i\}, \text{ hence it is zero.} \nn
\ee
This determines $u_i$, thus $\Phi_i(\delta, \cdot)$, as follows: suppose $u_i(S)$ has been determined for all $i$ and $|S| \le k-1$. Let $|T| =k \le |N|-1$. Then we have $u_i(T) = - u_i (T \setminus \{i\})$ for all $i \in T$ and it is constant (say $c_k$) by A3. Using A1 and A3, we obtain
\be
0 = \delta(T)=\sum_{i\in N}\Phi_i(\delta, T)=\sum_{i\in N}\bigg(u_i(T) + \frac{1}{|N| 2^{|N|}} \bigg), \nn
\ee
yielding $\sum_{i\in N} u_i(T) = -1/ 2^{|N|}$. With $u_i(T) = c_k$ for all $i \in T$, we deduce $u_j(T) =  \frac{ -1 - 2^{|N|} k c_k } {2^{|N|}(|N|-k) }$ for all $j \notin T$. This shows $u_i(T)$ is determined for all $|T|=k \le |N|-1$. Of course, $\Phi_i(\delta,N) = 1/|N|$ for all $i \in N$ by A1 and A3. By induction (on $|N|$ and on $k$ for each $N$), the proof of uniqueness of the operator $\Phi$ is therefore complete.

It remains to show that the component games $(v_i)_{i \in N}$ solving \eqref{ls1}  satisfies A1--A5. Firstly, A2 is clearly satisfied. To show that A1 is satisfied, we compute 
  \begin{equation*}
    \mathrm{L} \sum _{ i \in N } v _i = \sum _{ i \in N}  \mathrm{L} v _i = \sum _{ i \in N }   \mathrm{d}^\ast \p_i v =   \mathrm{d}^\ast \sum _{ i \in N } \p_i v =   \mathrm{d}^\ast \mathrm{d} v = \mathrm{L}v,
  \end{equation*}
since $ \mathrm{d} = \sum _{ i \in N } \p_i $. Hence by unique solvability of \eqref{ls1}, $\sum _{ i \in N } v _i =v$ as desired. 

Next, let $\sigma$ be a permutation of $N$. Let $\sigma$ act on $\ell ^2 (2^{N})$ and $\ell ^2 (\cE)$ via
\be
\sigma v(S) = v(\sigma (S)) \ \text{and} \ \sigma f\bigl( S , S \cup \{ i \}\bigr) =  f\bigl( \sigma (S) , \sigma (S \cup \{ i \})\bigr),  \ v \in \ell ^2 (2^{N}), \ f \in \ell ^2 (\cE).  \nn
\ee
It is easy to check $ \mathrm{d} \sigma = \sigma \mathrm{d} $ and
  $ \mathrm{d} _i \sigma  = \sigma \mathrm{d} _{ \sigma (i)
  } $. We also have $\mathrm{d}^\ast \sigma = \sigma \mathrm{d}^\ast$, since 
\be
  \langle v , \mathrm{d}^\ast \sigma f \rangle =    \langle \mathrm{d} v ,  \sigma f \rangle =  \langle \sigma^{-1} \mathrm{d} v ,   f \rangle =    \langle  \mathrm{d}  \sigma^{-1}  v ,   f \rangle =  \langle   \sigma^{-1}  v ,  \mathrm{d}^\ast f \rangle=   \langle   v ,  \sigma  \mathrm{d}^\ast f \rangle \nn
\ee
for any $v \in \ell ^2 (2^{N})$, $f \in \ell ^2 (\cE)$. Now let $\sigma$ be the transposition of $i,j$. We have
\be
   \mathrm{L}(\sigma v)_i = \mathrm{d}^\ast \p_i \sigma v = \mathrm{d}^\ast  \sigma  \p_j v =   \sigma \mathrm{d}^\ast \p_j v = \sigma  \mathrm{L} v_j =  \mathrm{L}\sigma v_j \nn
\ee
which shows $(\sigma v)_i = \sigma v_j$ by the unique solvability. Notice this corresponds to A3.
  
For A4, let $v \in \cG_N$, $i \in N$, and assume $\p_i v = 0$. Then from \eqref{ls1} we readily get $v_i \equiv 0$. Fix $j \neq i$, and let $\mathrm{\tilde d}$, ${\tilde \p}_j$ be the differential operators restricted on $2^{N \setminus \{i\}}$, and set $\tilde v = v_{-i}$, i.e., $\tilde v$ is the restriction of $v$ on $2^{N \setminus \{i\}}$. Let $\tilde v_j : 2^{N \setminus \{i\}} \to \R$ be the solution to the equation $\mathrm{\tilde d}^\ast \mathrm{\tilde d} \tilde v_j = \mathrm{\tilde d}^\ast {\tilde \p}_j \tilde v$ with $\tilde v_j (\es) = 0$. Finally, in view of A4, define $v_j \in \cG_N$ by $v_j = \tilde v_j$ on $2^{N \setminus \{i\} }$ and $\p_i v_j = 0$. Now observe that A4 will follow if we can verify that this $v_j$ indeed solves the equation $\mathrm{ d}^\ast \mathrm{d}  v_j = \mathrm{ d}^\ast \p_j  v$.

To show this, let $S \subset N \setminus \{i\}$. In fact the following string of equalities holds:
\be
\mathrm{d}^\ast \mathrm{d} v_j (S \cup \{i\}) 
=\mathrm{d}^\ast \mathrm{d} v_j (S) 
=\mathrm{\tilde d}^\ast \mathrm{\tilde d} \tilde v_j (S) 
=\mathrm{\tilde d}^\ast {\tilde \p}_j \tilde v (S) 
=\mathrm{d}^\ast \p_j  v (S) 
=\mathrm{d}^\ast \p_j  v (S \cup \{i\})
\nn
\ee
which simply follows from the definition of the differential operators. For instance
\begin{align*}
\mathrm{d}^\ast \mathrm{d} v_j (S)   
= \sum _{ T \sim S } \mathrm{d} v_j ( T, S ) 
= \sum _{ T \sim S, \, T \neq S \cup \{i\} } \mathrm{d} v_j ( T, S )
=\mathrm{\tilde d}^\ast \mathrm{\tilde d} \tilde v_j (S)
\end{align*}
where the second equality is due to $\p_i v_j = 0$. On the other hand, since $j \neq i$,
\begin{align*}
\mathrm{d}^\ast \p_j v (S)   
= \sum _{ T \sim S } \p_j v ( T, S ) 
=  \sum _{ T \sim S } {\tilde \p}_j \tilde v ( T, S ) 
=\mathrm{\tilde d}^\ast {\tilde \p}_j \tilde v (S).
\end{align*}
The first equality is due to the definition of $v_j$ (i.e. $v_j = \tilde v_j$ on $2^{N \setminus \{i\} }$ and $\p_i v_j = 0$), and the last equality is due to $\p_i v= 0$. This verifies A4.

Finally, we verify A5. For this, we need to verify the following claim:
\be\label{constancycondition}
v_i (S) + v_i (S \cup \{i\}) \, \text{ is constant over all } \, S \subset N \setminus \{i\}. 
\ee
Let $S \subset N \setminus \{i\}$, and recall $\mathrm{d}^\ast \p_i v (S) = v(S) - v(S \cup \{i\}) = -\mathrm{d}^\ast \p_i v (S \cup \{i\})$. Hence, $\mathrm{L} v_i (S) +\mathrm{L} v_i (S \cup \{i\} ) = 0$. Define $w_i \in \ell^2(2^{N})$ by $w_i (S) = v_i(S \cup \{i\} )$ and $w_i (S \cup \{i\} )= v_i(S  )$ for all $S \subset N \setminus \{i\}$. Then clearly $ \mathrm{L} v_i (S \cup \{i\} ) = \mathrm{L} w_i (S )$ and $\mathrm{L} v_i (S) = \mathrm{L} w_i (S \cup \{i\}  )$. Thus $\mathrm{L} (v_i + w_i) \equiv 0$, hence $v_i + w_i \in  \mathcal{N} ( \mathrm{d} )$, meaning that $v_i + w_i$ is constant. 
\end{proof}


\begin{proof}[Proof of Theorem \ref{coincide}]
Fix $v \in \cG_N$. Thanks to Theorem \ref{main}, it is enough to show that $\Psi_i(S):= \Psi_i (v, S)$ satisfies A1--A5. The linearity A2 trivially holds. For A1, observe
\be
\sum_{i \in N} {\cal I}_i(v, S) = \sum_{i \in N} \sum_{m=1}^{\tau_S} \p_i v \big(X_{m-1}, X_m \big) =  \sum_{m=1}^{\tau_S} \mathrm{d} v \big(X_{m-1}, X_m \big) = v(S) - v(\varnothing) = v(S) \nn
\ee
since $\mathrm{d} = \sum_i \p_i$. Thus $\sum_i \Psi_i(S) = \sum_i \E[{\cal I}_i(v, S)] = \E[\sum_i {\cal I}_i(v, S)] = v(S)$, showing A1.

For any $i \neq j$, note that each sample path $\omega$ has its counterpart $\omega^{ij}$, defined by
\be
X_m(\omega) = S \ \text{ if and only if } \ X_m(\omega^{ij}) = S^{ij} \ \text{ for every $m \in \N$ and } S \subset N.  \nn
\ee
With this, observe ${\cal I}_{i}(v^{ij},S^{ij})(\omega)  = {\cal I}_{j}(v, S)(\omega^{ij})$. Taking expectation then verifies A3.
 
 For A4, assume $\p_i v= 0$. Then ${\cal I}_i(v, \cdot) = 0$ readily gives $\Psi_i \equiv 0$. For $j \in N \setminus \{i\}$, let $(\tilde X_m)_m$ denote the random walk on the restricted state space $2^{N \setminus \{i\}}$. 
Observe that $(\tilde X_m)_m$ can be embedded in $( X_m)_m$ via the identification
 \be
 \tilde X_m := S \in 2^{N \setminus \{i\}}  \  \text{ if and only if } \ X_m = S  \, \text{ or } \, X_m = S \cup  \{i\}, \nn
 \ee
which implies the following identity
\be\label{111}
\Psi_j (v_{-i}, S) = \int_\Omega \sum_{m=1}^{\tau_S \wedge \tau_{S \cup  \{i\}}} \p_j v (X_{m-1}, X_m) d\cP \q \text{for every } j \in N \setminus \{i\}.
\ee
Consider an arbitrary connected finite path $\tta: X_0 \to X_1 \to \dots \to X_m$ on the hypercube graph  
where $X_0 = S$ and $X_m = S \cup \{i\}$, or $X_0 = S \cup \{i\}$ and $X_m = S$. Its reversed path is then given by 
$\tta': X_0' \to  \dots \to X_m'$ where $X_k' = X_{m-k}$. Then flip the reversed path with respect to $i$ and get the path $\tta^*: X_0^* \to  \dots \to X_m^*$, where $X^*_k = S \in 2^{N \setminus \{i\}}$ if $X'_k = S \cup \{i\}$  and $X^*_k = S \cup \{i\}$ if $X'_k = S$. Observe that the correspondence $\tta \leftrightarrow \tta^*$ is bijective, both either starts at $S$ and ends at $S \cup \{i\}$ or $S\cup \{i\}$ and $S$, and we have
\be
 \sum_{k=1}^{m} \p_j v (X_{k-1}, X_k)
 =   - \sum_{k=1}^{m} \p_j v (X'_{k-1}, X'_k)
 =  - \sum_{k=1}^{m} \p_j v (X^*_{k-1}, X^*_k) \ \text{ for all } j \in N \setminus \{i\}, \nn
\ee
where the second identity is due to $\p_i v \equiv 0$. The sign reversing implies there is zero expected gain by integrating $\p_j v$ from $S$ to $S \cup \{i\}$ or conversely  through random walk. The equation \eqref{111} therefore implies A4 holds: $\Psi_j(v_{-i}, S) = \Psi_j(v, S) = \Psi_j(v, S \cup  \{i\})$.


 Finally, let us verify A5. For any states $S,T \subset N$, consider the random walk $(X^S_m)_m$ whose initial state is $X^S_0 = S$ (instead of $\varnothing$). Define analogously ${\cal I}^S_i (v, T)$ and 
 $\Psi^S_i (v, T) = \E[{\cal I}^S_i (v, T)]$ using $X^S$ in place of $X$. Write $\Psi^S_i (v, T) = \Psi^S_i (T)$ for simplicity. By the correspondence between paths from $S$ to $T$ and their reverse from $T$ to $S$, it readily follows $\Psi^S_i (T) =  - \Psi^T_i (S)$. Now to verify A5 for $\Psi$, we need two identities. The first is:
 \be\label{identity}
\Psi_i (T)  - \Psi_i ( S) = \Psi^S_i ( T),
 \ee
which follows from the Markov property of the random walk. To see this, we compute 
\begin{align*}
{\cal I}_{i} (v, T) - {\cal I}_{i} (v, S) &= \sum_{m=1}^{\tau_T} \p_i v \big( X_{m-1}, X_m \big) 
-  \sum_{m=1}^{\tau_S} \p_i v \big( X_{m-1}, X_m \big) \\
&= {\bf 1}_{\tau_S < \tau_T}\sum_{m=\tau_S + 1}^{\tau_T} \p_i v \big( X_{m-1}, X_m \big) 
- {\bf 1}_{\tau_T < \tau_S}\sum_{m = \tau_T + 1}^{\tau_S} \p_i v \big( X_{m-1}, X_m \big).
\end{align*}
By taking expectation, we obtain via the Markov property
\begin{align*}
\E[{\cal I}_{i} (v, T)]  - \E[{\cal I}_{i} (v, S)]
 = \cP(\{\tau_S < \tau_T\}) \Psi^S_i (T) 
- \cP(\{\tau_T < \tau_S\}) \Psi^T_i (S) = \Psi^S_i (T),
\end{align*}
which shows \eqref{identity}. The second identity is:
\be\label{222}
\Psi^{S \cup \{i\}}_i( T\cup \{i\})
 = - \Psi^{S}_i( T) \q \text{for any } S, T \subset N \setminus \{i\}.
\ee
To see this, Let $S,T \subset N \setminus \{i\}$, and consider an arbitrary coalition path 
$\omega: X_0 \to X_1 \to \dots \to X_m$ 
where $X_0 = S$, $X_m = T$, and $(X_t, X_{t+1}) \in \overline{\cE}$. The flip of $\omega$ with respect to $i$ is then
$\omega^*: X^*_0 \to X^*_1 \to \dots \to X^*_m$ 
where $X^*_t := X_t \cup \{i\}$ if $i \notin X_t$, and $X^*_t := X_t \setminus \{i\}$ if $i \in X_t$. Then similarly as before, we obtain
\be
 \sum_{k=1}^{m} \p_i v (X_{k-1}, X_k)
 =  - \sum_{k=1}^{m} \p_i v (X^*_{k-1}, X^*_k), \nn
\ee
since only those transitions including or excluding $i$ yield nonzero $\p_i v$. Taking expectation over all random path $\omega$ implies \eqref{222}. With \eqref{identity}, for any $S, T \subset N \setminus \{i\}$, we deduce
\begin{align*}
 \Psi_i( T\cup \{i\}) - \Psi_i ( S \cup \{i\}) = \Psi^{S \cup \{i\}}_i( T\cup \{i\})
 = - \Psi^{S}_i( T) = - \big( \Psi_i(T) - \Psi_i (S) \big).
\end{align*}
This verifies A5, and hence by Theorem \ref{main}, completes the proof.
\end{proof}
\newpage

\bibliographystyle{plainnat}
\bibliography{TLbib}

\end{document}